\newtheorem{theorem}{Theorem}[section]
\newtheorem{corollary}[theorem]{Corollary}
\newtheorem{lemma}[theorem]{Lemma}
\newtheorem{proposition}[theorem]{Proposition}
\theoremstyle{definition}
\newtheorem{definition}[theorem]{Definition}
\newtheorem{example}[theorem]{Example}
\theoremstyle{remark}
\newtheorem{remark}[theorem]{Remark}
\numberwithin{equation}{section}
\renewcommand{\p@enumii}{}
\newcommand{\RR}{\mathbb{R}}
\newcommand{\Sym}{\operatorname{\mathbf{Sym}}}
\newcommand{\Cs}{\operatorname{\mathbf{Cs}}}
\newcommand{\Iso}{\operatorname{\mathbf{Iso}}}
\def\<#1>{\langle #1 \rangle}
\newbox\onebox
\newcommand{\coherent}[1]{\mathbin{\setbox\onebox=\hbox{$=$}\lower0.7\ht%
\onebox\hbox{$\stackrel{#1}{=}$}}}
\newcommand{\acr}{\newline\indent}
\begin{document}

\title{When all Permutations are Combinatorial Similarities}

\author{Viktoriia Bilet}
\address{\textbf{Viktoriia Bilet}\acr
Department of Theory of Functions \acr
Institute of Applied Mathematics and Mechanics of NASU\acr
Dobrovolskogo str. 1, Slovyansk 84100, Ukraine}
\email{viktoriiabilet@gmail.com}

\author{Oleksiy Dovgoshey}
\address{\textbf{Oleksiy Dovgoshey}\acr
Department of Theory of Functions \acr
Institute of Applied Mathematics and Mechanics of NASU \acr
Dobrovolskogo str. 1, Slovyansk 84100, Ukraine \acr
Institut fuer Mathematik Universitaet zu Luebeck\acr
Ratzeburger Allee 160, D-23562 Luebeck, Deutschland}

\email{oleksiy.dovgoshey@gmail.com}

\subjclass[2020]{Primary 54E35, Secondary 20M05.}
\keywords{combinatorial similarity, discrete metric, semimetric, strongly rigid metric, symmetric group.}

\begin{abstract}
Let \((X, d)\) be a semimetric space. A permutation \(\Phi\) of the set \(X\) is a combinatorial self similarity of \((X, d)\) if there is a bijective function \(f \colon d(X^2) \to d(X^2)\) such that
\[
d(x, y) = f(d(\Phi(x), \Phi(y)))
\]
for all \(x\), \(y \in X\). We describe the set of all semimetrics \(\rho\) on an arbitrary nonempty set \(Y\) for which every permutation of \(Y\) is a combinatorial self similarity of \((Y, \rho)\).
\end{abstract}

\maketitle

\section{Introduction}

Let us start from the classical notion of metric space introduced by Maurice Fr\'{e}chet in his thesis \cite{Fre1906RdCMdP}.

A \textit{metric} on a set \(X\) is a function \(d\colon X^{2} \to \RR\) such that for all \(x\), \(y\), \(z \in X\):
\begin{enumerate}
\item \(d(x,y) \geqslant 0\) with equality if and only if \(x=y\), the \emph{positivity property};
\item \(d(x,y)=d(y,x)\), the \emph{symmetry property};
\item \(d(x, y)\leq d(x, z) + d(z, y)\), the \emph{triangle inequality}.
\end{enumerate}
Here and what follows \(X^2\) is the Cartesian square of the set \(X\), i.e., \(X^2\) is the set of all ordered pairs \(\<x, y>\), where \(x\), \(y \in X\).

An useful generalization of the notion of metric space is the concept of semimetric space.

\begin{definition}\label{ch2:d2}
Let \(X\) be a set and let \(d \colon X^{2} \to \RR\) be a symmetric function. The function \(d\) is a \emph{semimetric} on \(X\) if it satisfies the positivity property.
\end{definition}

If \(d\) is a semimetric on \(X\), we say that \((X, d)\) is a \emph{semimetric space}. The semimetric spaces also were first considered by Fr\'{e}chet in \cite{Fre1906RdCMdP}, where he called them ``classes \(E\)''.  It should be noted that a different terminology is used to denote the class of semimetric spaces: distances spaces \cite{Gre2016CMP}, spaces endowed with dissimilarities \cite{DF1998DM}, symmetric spaces for the case of topological spaces with the topology generated by \(d\) \cite[Capter~10]{KV1984}. We inherit the terminology from Wilson's paper \cite{Wilson1931}, Blumenthal's book \cite{Blumenthal1953}, and many recent papers \cite{BP2017JNCA, DP2013AMH, DH2017JFPTA, JT2020R, KS2015JFPTA, PS2022AFM}.

In the next definition we will denote by \(d(X^2)\) the range of \(d\),
\[
d(X^2) := \{d(x, y) \colon x, y \in X\}.
\]

\begin{definition}[\cite{DLAMH2020}]\label{d1.2}
Let \((X, d)\) and \((Y, \rho)\) be semimetric spaces. The spaces \((X, d)\) and \((Y, \rho)\) are \emph{combinatorially similar} if there exist bijections \(\Psi \colon Y \to X\) and \(f \colon d(X^{2}) \to \rho(Y^{2})\) such that
\begin{equation}\label{d1.2:e1}
\rho(x,y) = f\bigl(d(\Psi(x), \Psi(y))\bigr)
\end{equation}
for all \(x\), \(y \in Y\). In this case, we will say that \(\Psi \colon Y \to X\) is a \emph{combinatorial similarity}.
\end{definition}

\begin{remark}\label{r1.3}
If \(Y \xrightarrow{\Psi} X\) is a combinatorial similarity of spaces \((X, d)\) and \((Y, \rho)\), then the inverse mapping \(X \xrightarrow{\Psi^{-1}} Y\) is also a combinatorial similarity of these spaces. Moreover, if \((Z, \delta)\) is a semimetric space and \(X \xrightarrow{\Phi} Z\) is a combinatorial similarity of \((Z, \delta)\) and \((X, d)\), then the composition \(Y \xrightarrow{\Psi} X \xrightarrow{\Phi} Z\) is also a combinatorial similarity. In particular, for every semimetric space \((X, d)\), the set of all combinatorial self similarities \(X \to X\) is a subgroup of the symmetric group \(\Sym(X)\) of all permutations on \(X\). In what follows this subgroup will be denoted as \(\Cs(X, d)\).
\end{remark}

Let us consider some examples of combinatorial similarities.

\begin{example}\label{ex1.3}
Semimetric spaces \((X, d)\) and \((Y, \rho)\) are \emph{isometric} if there is a bijection \(\Psi \colon Y \to X\) such that
\[
\rho(x, y) = d(\Psi(x), \Psi(y))
\]
for all \(x\), \(y \in Y\). In this case we will say that \(\Psi\) is an \emph{isometry} of \((X, d)\) and \((Y, \rho)\). It is easy to see that all isometrics are combinatorial similarities. Moreover, a combinatorial similarity \(\Psi \colon Y \to X\) is an isometry if and only if \eqref{d1.2:e1} holds for all \(x\), \(y \in Y\) with \(f(t) = t\) for every \(t \in d(X^2)\).
\end{example}

The group of all self isometries of a semimetric space \((X, d)\) will be denoted as \(\Iso(X, d)\).

\begin{example}\label{ex5.32}
Let \(\Phi \colon X \to Y\) be a bijection and let \(d \colon X^2 \to \RR\) and \(\rho \colon Y^2 \to \RR\) be some semimetrics. The mapping \(\Phi\) is a \emph{weak similarity} of \((X, d)\) and \((Y, \rho)\) if and only if the equivalence
\begin{equation}\label{ex5.32:e1}
(d(x, y) \leqslant d(w, z)) \Leftrightarrow (\rho(\Phi(x), \Phi(y)) \leqslant \rho(\Phi(w), \Phi(z)))
\end{equation}
is valid for all \(x\), \(y\), \(z\), \(w \in X\).
\end{example}

Equivalence~\eqref{ex5.32:e1} evidently implies the validity of
\begin{equation*}
(d(x, y) = d(w, z)) \Leftrightarrow (\rho(\Phi(x), \Phi(y)) = \rho(\Phi(w), \Phi(z))).
\end{equation*}
Thus, every weak similarity is a combinatorial similarity (see Lemma~\ref{l2.3} below). It is interesting to note that, the inverse statement is also valid forth case when  \((X, d)\) and \((Y, \rho)\) are ultrametric spaces. Every combinatorial similarity of \((X, d)\) and \((Y, \rho)\) is a weak similarity of these spaces (see Theorem 4.7 in \cite{DovBBMSSS2020}).  Some questions connected with the weak similarities and combinatorial similarities were also studied in \cite{DLAMH2020, Dov2019IEJA, BDS2021pNUAA}. The weak similarities of finite ultrametric and semimetric spaces were considered by E.~Petrov in \cite{Pet2018pNUAA}.

The combinatorial similarities are the main morphisms of semimetric spaces which will be studied in the paper. Let us introduce now the subclasses of semimetric spaces which will be important for us.

Let \((X, d)\) be a metric space. The metric \(d\) is said to be \emph{strongly rigid} if, for all \(x\), \(y\), \(u\), \(v \in X\), the condition
\begin{equation}\label{e1.3}
d(x, y) = d(u, v) \neq 0
\end{equation}
implies
\begin{equation}\label{e3.6}
(x = u \text{ and } y = v) \text{ or } (x = v \text{ and } y = u).
\end{equation}
(Some properties of strongly rigid metric spaces are described in \cite{Janos1972, Martin1977, BDKP2017AASFM, DLAMH2020, DS2021aa}.)

The concept of strongly rigid metric can be naturally generalized to the concept of \emph{strongly rigid semimetric}.

\begin{definition}\label{d4.3}
Let \((X, d)\) be a semimetric space. The semimetric \(d\) is \emph{strongly rigid} if \eqref{e1.3} implies \eqref{e3.6} for all \(x\), \(y\), \(u\), \(v \in X\).
\end{definition}

It is easy to prove that the equality \(\Cs(X, d) = \Sym(X)\) holds for every strongly rigid \((X, d)\) (see proof of Lemma~\ref{l4.2}).

The following generalization of strongly rigid semimetric spaces was recently introduced in \cite{DS2021aa}.

\begin{definition}\label{d1.6}
A semimetric space \((X, d)\) is \emph{weakly rigid} if every three-point subspace of \((X, d)\) is strongly rigid.
\end{definition}

We say that a semimetric \(d \colon X^{2} \to \RR\) is \emph{discrete} if the inequality
\begin{equation}\label{e3.5}
|d(X^{2})| \leqslant 2
\end{equation}
holds, where \(|d(X^{2})|\) is the cardinal number of the set \(d(X^{2})\). Thus, a semimetric \(d \colon X^{2} \to \RR\) is discrete if and only if there is \(k >0\) such that the equality
\begin{equation}\label{e3.5*}
d(x, y) = k
\end{equation}
holds for all different \(x\), \(y \in X\). It follows directly from the definitions that a semimetric space \((X, d)\) is discrete if and only if \(\Iso(X, d) = \Sym(X)\).

The goal of the paper is to describe all possible semimetric spaces \((X, d)\) for which the equality
\begin{equation}\label{e1.6}
\Cs(X, d) = \Sym(X)
\end{equation}
holds.

The paper is organized as follows.

Proposition~\ref{p4.4} gives us a characterization of discrete \(d \colon X^2 \to \RR\) in terms of combinatorial self similarities of \((X, d)\).

The main result of the paper, Theorem~\ref{t2.4}, completely characterizes the structure of all semimetric spaces \((X, d)\) satisfying equality \eqref{e1.6}.

In Corollary~\ref{c2.10} we show that equality \eqref{e1.6} is equivalent to the equality
\[
\Iso(X, d) = \Sym(X)
\]
if \(X\) is big enough.

\section{Permutations and combinatorial self similarities}

Let us start from an example of a four-point metric space \((Z, d)\) which is neither strongly rigid nor discrete but satisfies the equality \(\Cs(Z, d) = \Sym(Z)\).

\begin{example}\label{ex2.1}
Let \(Z = \{z_1, z_2, z_3, z_4\}\) be the four-point subset of the complex plane \(\mathbb{C}\),
\[
z_1 = 0+0i, \quad z_2 = 0+3i, \quad z_3 = 4+3i, \quad z_4 = 4+0i,
\]
and let \(d\) be the restriction of the usual Euclidean metric on \(Z^2\). The equalities \(d(z_1, z_2) = d(z_3, z_4) = 3\) and \(d(z_1, z_4) = d(z_2, z_3) = 4\) imply that \((Z, d)\) is neither strongly rigid nor discrete but it can be proved directly that \(\Cs(Z, d) = \Sym(Z)\) holds.
\end{example}

\begin{lemma}\label{l2.3}
Let \((X, d)\) and \((Y, \rho)\) be semimetric spaces. A bijection \(\Phi \colon Y \to X\) is a combinatorial similarity iff the equivalence
\[
\bigl(\rho(x, y) = \rho(u, v)\bigr) \Leftrightarrow \Bigl(d\bigl(\Phi(x), \Phi(y)\bigr) = d\bigl(\Phi(u), \Phi(v)\bigr)\Bigr)
\]
is valid for all \(x\), \(y\), \(u\), \(v \in Y\).
\end{lemma}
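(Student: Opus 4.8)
The plan is to prove both implications directly from Definition~\ref{d1.2}, exploiting the fact that the scaling function $f$ appearing there is required to be a \emph{bijection}. The guiding observation is that the stated equivalence says precisely that $\rho(x,y)$ depends on the pair $\langle x, y\rangle$ only through the value $d(\Phi(x),\Phi(y))$, and does so in a one-to-one fashion; this is exactly what the existence of a bijective $f$ with $\rho(x,y)=f(d(\Phi(x),\Phi(y)))$ encodes.

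For the forward implication, I would assume $\Phi$ is a combinatorial similarity, so there is a bijection $f \colon d(X^{2}) \to \rho(Y^{2})$ satisfying \eqref{d1.2:e1} with $\Psi = \Phi$, i.e. $\rho(x,y)=f\bigl(d(\Phi(x),\Phi(y))\bigr)$ for all $x,y \in Y$. Then, for arbitrary $x,y,u,v \in Y$, the implication from right to left is obtained by applying the function $f$ to the equality $d(\Phi(x),\Phi(y))=d(\Phi(u),\Phi(v))$, while the implication from left to right follows from the injectivity of $f$. This gives the required equivalence.

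For the converse, the substantive step is to \emph{construct} $f$ out of the equivalence. Since $\Phi$ is surjective, every element of $d(X^{2})$ has the form $d(\Phi(x),\Phi(y))$ for some $x,y \in Y$, so I would define $f$ by $f\bigl(d(\Phi(x),\Phi(y))\bigr) := \rho(x,y)$. The only genuinely delicate point — and the place where the hypothesis is used — is well-definedness: if a single value $d(\Phi(x),\Phi(y))=d(\Phi(u),\Phi(v))$ is produced by two different pairs, the equivalence read from right to left guarantees $\rho(x,y)=\rho(u,v)$, so the value of $f$ does not depend on the chosen representative. Surjectivity of $f$ onto $\rho(Y^{2})$ is immediate from the definition, and injectivity is exactly the equivalence read from left to right. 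Hence $f \colon d(X^{2}) \to \rho(Y^{2})$ is a bijection satisfying \eqref{d1.2:e1} with $\Psi = \Phi$, so $\Phi$ is a combinatorial similarity.

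The main obstacle is thus the well-definedness of $f$; all remaining verifications are routine bookkeeping about bijections. It is worth emphasizing that the two directions of the equivalence match up precisely with the two nontrivial properties of $f$ (well-definedness on one side, injectivity on the other), which is the structural reason the characterization holds as an ``iff''.
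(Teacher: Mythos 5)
Your proof is correct and is exactly the argument the paper intends: the paper's own ``proof'' is the single line ``It follows from Definition~\ref{d1.2},'' and your write-up simply supplies the details that were left implicit (injectivity of \(f\) for one direction, and the well-defined construction of \(f\) via surjectivity of \(\Phi\) for the other). Nothing is missing; your identification of well-definedness as the only delicate point is accurate.
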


\begin{proof}
It follows from Definition~\ref{d1.2}.
\end{proof}

The next proposition characterizes all semimetric spaces which are combinatorially similar to the rectangle from Example~\ref{ex2.1}.

\begin{proposition}\label{p2.2}
Let \(\rho \colon X^2 \to \RR\) be a semimetric on the set \(X\) with \(|X| \geqslant 4\). Then the following conditions are equivalent:
\begin{enumerate}
\item \label{p2.2:c1} \((X, \rho)\) is combinatorially similar to the metric space \((Z, d)\) from Example~\ref{ex2.1}.
\item \label{p2.2:c2} \((X, \rho)\) is weakly rigid and, moreover, all three-point subspaces of \((X, \rho)\) are isomeric.
\end{enumerate}
\end{proposition}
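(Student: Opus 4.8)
The plan is to prove the two implications separately, after recording two observations. First, a three-point semimetric space is strongly rigid exactly when its three pairwise distances are pairwise distinct, so weak rigidity of \((X,\rho)\) says precisely that every triangle has three distinct distances; adding that all three-point subspaces are isometric then produces pairwise distinct reals \(a,b,c\) such that every triangle of \((X,\rho)\) has distance set \(\{a,b,c\}\), whence \(\rho(X^2)=\{0,a,b,c\}\). Second, the space \((Z,d)\) of Example~\ref{ex2.1} has exactly this shape: each of its four triangles carries the set \(\{3,4,5\}\), and in it two distinct edges have equal length if and only if they are disjoint (i.e.\ form a pair of opposite edges of the tetrahedron).

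For the implication \ref{p2.2:c1}\(\Rightarrow\)\ref{p2.2:c2} I would take a combinatorial similarity \(\Psi\colon X\to Z\) with accompanying bijection \(f\colon d(Z^2)\to\rho(X^2)\) as in Definition~\ref{d1.2}. Every three-point subspace \(S\) of \((X,\rho)\) is the \(\Psi\)-preimage of a triangle \(\Psi(S)\) of \((Z,d)\), and since the distances inside \(S\) are the \(f\)-images of the distances inside \(\Psi(S)\), the triangle \(S\) has distance set \(\{f(3),f(4),f(5)\}\), three distinct values because \(f\) is injective. As this set does not depend on \(S\), the space \((X,\rho)\) is weakly rigid with all three-point subspaces isometric, which is \ref{p2.2:c2}.

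The converse \ref{p2.2:c2}\(\Rightarrow\)\ref{p2.2:c1} carries the real content. I would first show \(|X|=4\): fixing \(x\in X\), no two distinct points \(y,z\in X\setminus\{x\}\) can satisfy \(\rho(x,y)=\rho(x,z)\), since the triangle \(\{x,y,z\}\) would then fail to be strongly rigid; hence \(y\mapsto\rho(x,y)\) is injective on \(X\setminus\{x\}\) with values in the three-element set \(\{a,b,c\}\), giving \(|X\setminus\{x\}|\leqslant 3\), and with \(|X|\geqslant 4\) this forces \(|X|=4\). Next I would determine the equality pattern of \(\rho\) on the six edges of the complete graph on \(X\): two distinct edges sharing a vertex lie in a common triangle and hence have distinct lengths, while a short computation with the triangles of \((X,\rho)\) shows that the two edges in each of the three pairs of opposite (disjoint) edges receive equal lengths. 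Thus, exactly as in \((Z,d)\), two distinct unordered pairs of points of \(X\) are \(\rho\)-equidistant precisely when they are disjoint.

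Finally I would build the similarity. Since every vertex bijection preserves and reflects both equality and disjointness of pairs of points, an arbitrary bijection \(\Psi\colon X\to Z\) satisfies
\[
\bigl(\rho(p,q)=\rho(r,s)\bigr)\Leftrightarrow\bigl(d(\Psi(p),\Psi(q))=d(\Psi(r),\Psi(s))\bigr)
\]
for all \(p,q,r,s\in X\), the cases \(p=q\) or \(r=s\) being immediate; by Lemma~\ref{l2.3} such a \(\Psi\) is a combinatorial similarity, which is \ref{p2.2:c1}. I expect the main obstacle to be the structural claim that opposite edges of the tetrahedron must be equidistant, as this is the one step that is genuine computation rather than an appeal to Lemma~\ref{l2.3}; once the equality pattern of \(\rho\) is known to coincide with that of \(d\), the passage to a combinatorial similarity is automatic.
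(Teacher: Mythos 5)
Your proposal is correct and follows essentially the same route as the paper's proof: pigeonhole on the distances from a fixed point to get \(|X| = 4\), then forcing the pattern ``adjacent edges distinct, opposite edges equal'' from the requirement that every triangle carries the same three distinct distances, and finally an appeal to Lemma~\ref{l2.3}. The only (harmless) difference is at the last step, where you observe that the equality pattern makes \emph{every} bijection \(X \to Z\) a combinatorial similarity, while the paper exhibits one explicit vertex-matching bijection \(F(z_j) = x_j\).
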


\begin{proof}
\(\ref{p2.2:c1} \Rightarrow \ref{p2.2:c2}\). The validity of this implication can be proved using Lemma~\ref{l2.3}.

\(\ref{p2.2:c2} \Rightarrow \ref{p2.2:c1}\). Let \ref{p2.2:c2} hold. Then there are different numbers \(a\), \(b\), \(c \in (0, \infty)\) such that, for every three-point subspace \(T\) of \((X, \rho)\), we have
\begin{equation}\label{p2.2:e1}
\rho(X^2) = \rho(T^2) = \{0, a, b, c\}.
\end{equation}

Using \eqref{p2.2:e1} it is easy to prove that \(|X| = 4\). Indeed, let \(p\) be a point of \(X\). If \(|X| \geqslant 5\) holds, then we have the inequality
\[
|X \setminus \{p\}| \geqslant 4.
\]
Consequently, by Pigeonhole principle, there are some different points \(x_1\), \(x_2 \in X \setminus \{p\}\) such that
\[
\rho(x_1, p) = \rho(x_2, p).
\]
Thus, \((X, \rho)\) is not weakly rigid contrary to \ref{p2.2:c2}. It implies the inequality
\begin{equation}\label{p2.2:e1.1}
|X| \leqslant 4.
\end{equation}
By condition, we have \(|X| \geqslant 4\). The last inequality and \eqref{p2.2:e1.1} give us \(|X| = 4\).

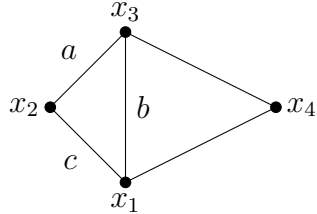
\begin{figure}[htb]
\begin{tikzpicture}
\coordinate [label=below: \(x_1\)] (x1) at (1, -1);
\coordinate [label=left: \(x_2\)] (x2) at (0, 0);
\coordinate [label=above: \(x_3\)] (x3) at (1, 1);
\coordinate [label=right: \(x_4\)] (x4) at (3, 0);

\draw [fill] (x1) circle (2pt);
\draw [fill] (x2) circle (2pt);
\draw [fill] (x3) circle (2pt);
\draw [fill] (x4) circle (2pt);

\draw (x1) -- node [below left] {\(c\)} (x2) -- node [above left] {\(a\)} (x3) -- (x4) -- (x1);
\draw (x1) -- node [right] {\(b\)} (x3);
\end{tikzpicture}
\caption{Add the point \(x_4\) to the triangle \(\{x_1, x_2, x_3\}\).}
\label{fig3}
\end{figure}

Let \(x_1\), \(x_2\), \(x_3\) be points of \(X\) such that
\[
\rho(x_1, x_2) = c, \quad \rho(x_2, x_3) = a \quad \text{and}\quad \rho(x_3, x_1) = b.
\]
Since \(|X| = 4\) holds, the set \(X \setminus \{x_1, x_2, x_3\}\) contains a unique point \(x_4\). Let us consider the triangle \(\{x_1, x_3, x_4\}\). From \eqref{p2.2:e1} with \(T = \{x_2, x_3, x_4\}\) and the equality \(\rho(x_3, x_1) = b\) it follows that we have either
\begin{equation}\label{p2.2:e2}
\rho(x_3, x_4) = a
\end{equation}
or \(\rho(x_3, x_4) = c\). If \eqref{p2.2:e2} holds, then we obtain \(\rho(x_2, x_3) = \rho(x_3, x_4)\). Hence, \((X, \rho)\) is not weakly rigid contrary to \ref{p2.2:c2}. Thus, we have \(\rho(x_3, x_4) = c\). The last equality, the equality \(\rho(x_1, x_3) = b\), and \eqref{p2.2:e1} imply the equality \(\rho(x_4, x_1) = a\).

Let us consider now the triangle \(\{x_2, x_3, x_4\}\). Then using \eqref{p2.2:e1} with \(T = \{x_2, x_3, x_4\}\) we can prove that \(\rho(x_3, x_4) = c\) and \(\rho(x_2, x_3) = a\) imply \(\rho(x_3, x_1) = b\). To complete the proof it is sufficient to note that the bijection \(F \colon Z \to X\), \(F(z_j) = x_j\), \(j = 1\), \(\ldots\), \(4\), is a combinatorial similarity of the semimetric space \((X, \rho)\) and the rectangle \((Z, d)\) (see Figure~\ref{fig4} below). \qedhere

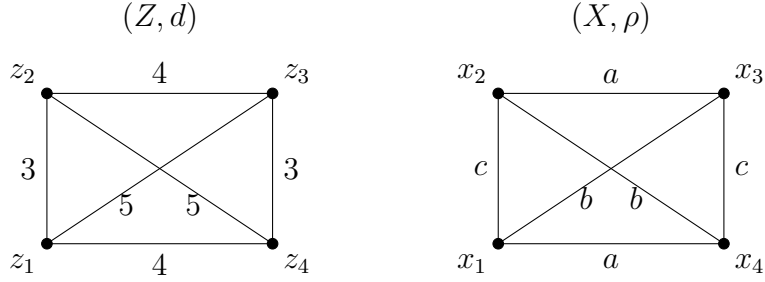
\begin{figure}[hb]
\begin{tikzpicture}
\def\dx{3cm}
\def\dy{2cm}
\coordinate [label=below left: \(z_1\)] (x1) at (0, 0);
\coordinate [label=above left: \(z_2\)] (x2) at (0, \dy);
\coordinate [label=above right: \(z_3\)] (x3) at (\dx, \dy);
\coordinate [label=below right: \(z_4\)] (x4) at (\dx, 0);

\draw [fill] (x1) circle (2pt);
\draw [fill] (x2) circle (2pt);
\draw [fill] (x3) circle (2pt);
\draw [fill] (x4) circle (2pt);

\node at (\dx/2, 1.5*\dy) {\((Z, d)\)};

\draw (x1) -- node [left] {\(3\)} (x2) -- node [above] {\(4\)} (x3) --node [right] {\(3\)}  (x4) -- node [below] {\(4\)} (x1);
\draw (x1) -- node [below right=8pt] {\(5\)} (x3);
\draw (x2) -- node [below left=8pt] {\(5\)} (x4);

\begin{scope}[xshift=6cm]
\coordinate [label=below left: \(x_1\)] (x1) at (0, 0);
\coordinate [label=above left: \(x_2\)] (x2) at (0, \dy);
\coordinate [label=above right: \(x_3\)] (x3) at (\dx, \dy);
\coordinate [label=below right: \(x_4\)] (x4) at (\dx, 0);

\draw [fill] (x1) circle (2pt);
\draw [fill] (x2) circle (2pt);
\draw [fill] (x3) circle (2pt);
\draw [fill] (x4) circle (2pt);

\node at (\dx/2, 1.5*\dy) {\((X, \rho)\)};

\draw (x1) -- node [left] {\(c\)} (x2) -- node [above] {\(a\)} (x3) --node [right] {\(c\)}  (x4) -- node [below] {\(a\)} (x1);
\draw (x1) -- node [below right=4pt] {\(b\)} (x3);
\draw (x2) -- node [below left=4pt] {\(b\)} (x4);
\end{scope}
\end{tikzpicture}
\caption{The space \((X, \rho)\) and \((Z, d)\) are combinatorially similar.}
\label{fig4}
\end{figure}
\end{proof}

\begin{lemma}\label{l4.2}
Let \((X, d)\) be a three-point metric space. Then the following statements are equivalent:
\begin{enumerate}
\item\label{l4.2:s1} Either \((X, d)\) is strongly rigid or \((X, d)\) is discrete.
\item\label{l4.2:s2} \(\Cs(X, d) = \Sym(X)\) holds.
\end{enumerate}
\end{lemma}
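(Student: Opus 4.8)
The plan is to split into the two implications and reduce everything to the combinatorial criterion of Lemma~\ref{l2.3}. Writing \(X = \{x_1, x_2, x_3\}\) and abbreviating the three nonzero distances as \(a = d(x_1, x_2)\), \(b = d(x_2, x_3)\), \(c = d(x_1, x_3)\), I would first observe that exactly three mutually exclusive situations can occur: \(|\{a,b,c\}| = 1\), which is precisely the discrete case; \(|\{a,b,c\}| = 3\), which is precisely the strongly rigid case, since the only coincidence of nonzero distances then comes from identical unordered pairs; and \(|\{a,b,c\}| = 2\), in which \((X,d)\) is neither discrete (three distinct distance values) nor strongly rigid (two different pairs share a common distance). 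Thus statement~\ref{l4.2:s1} is exactly the negation of the condition \(|\{a,b,c\}| = 2\).

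For the implication \ref{l4.2:s1} \(\Rightarrow\) \ref{l4.2:s2}, I would treat the two cases separately. If \((X,d)\) is discrete, then \(\Iso(X,d) = \Sym(X)\) as recorded in the introduction, and since every isometry is a combinatorial similarity (Example~\ref{ex1.3}) while \(\Cs(X,d) \subseteq \Sym(X)\) always holds, the resulting inclusions force \(\Cs(X,d) = \Sym(X)\). If \((X,d)\) is strongly rigid, I would verify the criterion of Lemma~\ref{l2.3} directly for an arbitrary permutation \(\Phi\): strong rigidity makes the relation \(d(x,y) = d(u,v)\) equivalent to the purely set-theoretic condition that either \(x=y\) and \(u=v\), or \(\{x,y\} = \{u,v\}\), and this condition is manifestly invariant under any bijection \(\Phi\) of \(X\). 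Hence the equivalence of Lemma~\ref{l2.3} holds for every \(\Phi\), giving \(\Cs(X,d) = \Sym(X)\). This argument in fact needs no restriction on \(|X|\), which is why the introduction can already appeal to it for strongly rigid spaces in general.

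For the converse \ref{l4.2:s2} \(\Rightarrow\) \ref{l4.2:s1}, I would argue by contraposition and exhibit an explicit permutation that fails to be a combinatorial self similarity in the remaining case \(|\{a,b,c\}| = 2\). After relabelling, I may assume the two coincident distances share a common endpoint, say \(d(x_1, x_2) = d(x_2, x_3) = a \neq c = d(x_1, x_3)\), so that \(x_2\) is the apex of the isosceles triangle. Consider the transposition \(\Phi\) swapping \(x_2\) and \(x_3\) and fixing \(x_1\). Then \(d(x_1, x_2) = d(x_2, x_3)\) holds, whereas \(d(\Phi(x_1), \Phi(x_2)) = d(x_1, x_3) = c\) and \(d(\Phi(x_2), \Phi(x_3)) = d(x_3, x_2) = a\) are unequal. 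This violates the equivalence of Lemma~\ref{l2.3}, so \(\Phi \notin \Cs(X,d)\) and hence \(\Cs(X,d) \neq \Sym(X)\).

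The case analysis carries essentially no hidden difficulty; the only point deserving care is the justification of the relabelling in the last paragraph, namely that whichever two of \(a,b,c\) coincide, one can always name the points so that the repeated distance is realized by two pairs meeting at a common vertex. This is automatic in a triangle, since any two of the three pairs \(\{x_1,x_2\}\), \(\{x_2,x_3\}\), \(\{x_1,x_3\}\) share exactly one vertex. Everything else reduces to the bookkeeping already packaged in Lemma~\ref{l2.3} together with the facts about discrete and strongly rigid spaces stated in the introduction.
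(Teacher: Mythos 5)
Your proof is correct and follows essentially the same route as the paper: the discrete case via \(\Iso(X,d) \subseteq \Cs(X,d) \subseteq \Sym(X)\), the strongly rigid case by observing that coincidence of distances reduces to a pair-identity condition invariant under any bijection, and the converse by exhibiting a permutation violating the criterion of Lemma~\ref{l2.3} on an isosceles non-equilateral triangle. The only (cosmetic) difference is that you use the transposition swapping the two base points while the paper uses the 3-cycle \(x_1 \mapsto x_2 \mapsto x_3 \mapsto x_1\); both witness the failure equally well.
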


\begin{proof}
\(\ref{l4.2:s1} \Rightarrow \ref{l4.2:s2}\). Let \ref{t2.4:s1} hold. We must prove that every permutation of \(X\) is a combinatorial self similarity of \((X, d)\). Let \(\Phi\) belong to \(\Sym(X)\). By Lemma~\ref{l2.3}, the mapping \(\Phi\) is a combinatorial similarity iff
\begin{equation}\label{l4.2:e1}
\bigl(d(x, y) = d(u, v)\bigr) \Leftrightarrow \bigl(d(\Phi(x), \Phi(y)) = d(\Phi(u), \Phi(v))\bigr)
\end{equation}
holds for all \(x\), \(y\), \(u\), \(v \in X\). It is clear that \eqref{l4.2:e1} is valid if \(x = y\) or \(u = v\). Thus, it suffices to prove the validity of \eqref{l4.2:e1} for the case when
\begin{equation}\label{l4.2:e2}
d(x, y) = d(u, v) \neq 0.
\end{equation}

If \(d\) is strongly rigid, then \eqref{l4.2:e2} implies
\begin{equation}\label{l4.2:e3}
\<x, y> = \<u, v> \quad \text{or} \quad \<x, y> = \<v, u>
\end{equation}
(see \eqref{e3.6}). Since \(\Phi \colon X \to X\) is bijective, \eqref{l4.2:e3} can be written in the following equivalent form
\[
\<\Phi(x), \Phi(y)> = \<\Phi(u), \Phi(v)> \quad \text{or} \quad \<\Phi(x), \Phi(y)> = \<\Phi(v), \Phi(u)>.
\]
The right equality from \eqref{l4.2:e1} follows.

If \(d\) is discrete, then we have \(\Sym(X) = \Iso(X, d)\). That implies \ref{l4.2:s2} because
\[
\Iso(X, d) \subseteq \Cs(X, d) \subseteq \Sym(X).
\]

\(\ref{l4.2:s2} \Rightarrow \ref{l4.2:s1}\). Let \ref{l4.2:s2} hold. If \((X, d)\) is neither rigid nor discrete, then, we can numbered the points of \(X\) and find \(a\), \(b \in (0, \infty)\) such that \(X = \{x_1, x_2, x_3\}\) and
\begin{equation}\label{l4.2:e4}
d(x_1, x_2) = d(x_2, x_3) = a \neq b = d(x_3, x_1).
\end{equation}
Let consider the permutation
\begin{equation}\label{l4.2:e5}
\Phi = \begin{pmatrix}
x_1 & x_2 & x_3 \\
x_2 & x_3 & x_1
\end{pmatrix}.
\end{equation}
Using \eqref{l4.2:e4} and \eqref{l4.2:e5} we obtain
\[
d(\Phi(x_1), \Phi(x_2)) = d(x_2, x_3) = a
\]
and
\[
d(\Phi(x_2), \Phi(x_3)) = d(x_3, x_1) = b.
\]
Hence, the permutation \(\Phi\) is not a combinatorial self similarity by Lemma~\ref{l2.3}. The last statement contradicts \ref{l4.2:s2}.
\end{proof}

\begin{lemma}\label{l4.3}
Let \((X, d)\) be a semimetric space and let \(Y\) be a nonempty subset of \(X\). If \(\Cs(X, d) = \Sym(X)\) holds, then we have the equality \(\Cs(Y, d|_{Y^2}) = \Sym(Y)\).
\end{lemma}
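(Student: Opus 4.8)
The plan is to prove the two inclusions making up the equality separately. The inclusion $\Cs(Y, d|_{Y^2}) \subseteq \Sym(Y)$ is automatic, since by Remark~\ref{r1.3} the set of combinatorial self similarities of any semimetric space is a subgroup of the full symmetric group of its underlying set. Hence the whole content of the lemma lies in establishing the reverse inclusion $\Sym(Y) \subseteq \Cs(Y, d|_{Y^2})$, i.e.\ that \emph{every} permutation of $Y$ is a combinatorial self similarity of $(Y, d|_{Y^2})$.

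To prove this, I would fix an arbitrary $\varphi \in \Sym(Y)$ and lift it to a permutation $\Phi$ of the whole set $X$ by setting $\Phi|_{Y} = \varphi$ and $\Phi(x) = x$ for all $x \in X \setminus Y$. Since $\varphi$ is a bijection of $Y$ and the identity is a bijection of $X \setminus Y$, and $X$ is the disjoint union of these two sets, the map $\Phi$ is a genuine element of $\Sym(X)$. By the hypothesis $\Cs(X, d) = \Sym(X)$, we therefore have $\Phi \in \Cs(X, d)$, so $\Phi$ is a combinatorial self similarity of $(X, d)$. Applying Lemma~\ref{l2.3} to $\Phi$ gives the equivalence
\[
\bigl(d(x, y) = d(u, v)\bigr) \Leftrightarrow \bigl(d(\Phi(x), \Phi(y)) = d(\Phi(u), \Phi(v))\bigr)
\]
for \emph{all} $x$, $y$, $u$, $v \in X$, and in particular for all such points taken in $Y$.

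The key step is then to restrict this equivalence to points of $Y$ and observe that it transforms verbatim into the corresponding statement for $(Y, d|_{Y^2})$ and $\varphi$. Indeed, for $x$, $y \in Y$ we have $d|_{Y^2}(x, y) = d(x, y)$ by definition of the restricted semimetric, while $\Phi(x) = \varphi(x) \in Y$ and $\Phi(y) = \varphi(y) \in Y$, so that $d(\Phi(x), \Phi(y)) = d(\varphi(x), \varphi(y)) = d|_{Y^2}(\varphi(x), \varphi(y))$, and similarly for the pair $u$, $v$. Consequently the displayed equivalence, read only for $x$, $y$, $u$, $v \in Y$, becomes exactly
\[
\bigl(d|_{Y^2}(x, y) = d|_{Y^2}(u, v)\bigr) \Leftrightarrow \bigl(d|_{Y^2}(\varphi(x), \varphi(y)) = d|_{Y^2}(\varphi(u), \varphi(v))\bigr).
\]
Applying Lemma~\ref{l2.3} now in the reverse direction, to the space $(Y, d|_{Y^2})$ and the permutation $\varphi$, yields $\varphi \in \Cs(Y, d|_{Y^2})$.

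Since $\varphi \in \Sym(Y)$ was arbitrary, this proves $\Sym(Y) \subseteq \Cs(Y, d|_{Y^2})$, and together with the automatic reverse inclusion it gives the desired equality. I do not expect a genuine obstacle here: the argument is a routine ``extend by the identity, apply the hypothesis, then restrict'' device, and the only point requiring any care is the bookkeeping in the middle paragraph, namely verifying that $\Phi$ is honestly a permutation of $X$ and that the Lemma~\ref{l2.3} equivalence for $\Phi$ restricts cleanly to the equivalence for $\varphi$ because $\Phi$ agrees with $\varphi$ on $Y$ and $d$ agrees with $d|_{Y^2}$ on pairs from $Y$.
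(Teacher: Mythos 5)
Your proof is correct and takes essentially the same route as the paper's own (much terser) proof: extend a permutation of \(Y\) by the identity to a permutation of \(X\), invoke the hypothesis \(\Cs(X, d) = \Sym(X)\), and transfer the Lemma~\ref{l2.3} equivalence back to \((Y, d|_{Y^2})\) by restriction. Your version simply spells out the bookkeeping that the paper leaves implicit.
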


\begin{proof}
Each permutation of \(Y\) can extended to a permutation of \(X\) and, in addition, if \(\Psi \colon X \to X\) is a combinatorial self similarity, then \(\Phi = \Psi|_{Y}\) is also a combinatorial self similarity by Lemma~\ref{l2.3}.
\end{proof}

\begin{lemma}\label{l2.6}
Let \((X, d)\) and \((Y, \rho)\) be combinatorially similar. Then the equality \(\Cs(X, d) = \Sym(X)\) holds if and only if we have \(\Cs(Y, \rho) = \Sym(Y)\).
\end{lemma}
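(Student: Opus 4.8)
The plan is to exploit the symmetry and composition properties of combinatorial similarities recorded in Remark~\ref{r1.3}. By Definition~\ref{d1.2} there is a combinatorial similarity $\Psi \colon Y \to X$ of $(X, d)$ and $(Y, \rho)$. Since Remark~\ref{r1.3} guarantees that the inverse $X \xrightarrow{\Psi^{-1}} Y$ is again a combinatorial similarity of these spaces, the situation is symmetric in $(X, d)$ and $(Y, \rho)$; hence it suffices to prove the single implication ``$\Cs(X, d) = \Sym(X)$ implies $\Cs(Y, \rho) = \Sym(Y)$,'' the reverse implication following by interchanging the roles of the two spaces.

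The key step is to transport permutations across $\Psi$ by conjugation. Given an arbitrary $\Phi \in \Sym(Y)$, set $\widehat{\Phi} := \Psi \circ \Phi \circ \Psi^{-1}$, which is a permutation of $X$ because it is the composition of the bijections in the chain $X \xrightarrow{\Psi^{-1}} Y \xrightarrow{\Phi} Y \xrightarrow{\Psi} X$. I claim that $\widehat{\Phi} \in \Cs(X, d)$ if and only if $\Phi \in \Cs(Y, \rho)$. Indeed, if $\Phi$ is a combinatorial self similarity of $(Y, \rho)$, then in the displayed chain every arrow is a combinatorial similarity (the outer two by Remark~\ref{r1.3} applied to $\Psi$ and $\Psi^{-1}$, the middle one by assumption), so the composition property of Remark~\ref{r1.3} shows that their composite $\widehat{\Phi}$ is a combinatorial self similarity of $(X, d)$. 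Conversely, writing $\Phi = \Psi^{-1} \circ \widehat{\Phi} \circ \Psi$ and repeating the argument shows that $\widehat{\Phi} \in \Cs(X, d)$ forces $\Phi \in \Cs(Y, \rho)$.

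With this equivalence in hand the conclusion is immediate. Assuming $\Cs(X, d) = \Sym(X)$ and taking any $\Phi \in \Sym(Y)$, we have $\widehat{\Phi} \in \Sym(X) = \Cs(X, d)$, whence $\Phi \in \Cs(Y, \rho)$ by the claim; as $\Phi$ was arbitrary, $\Cs(Y, \rho) = \Sym(Y)$, and the other direction follows by symmetry. I do not anticipate a genuine obstacle here: the whole content is that conjugation by $\Psi$ carries $\Cs(Y, \rho)$ onto $\Cs(X, d)$. The only point requiring care is the bookkeeping when invoking the composition property of Remark~\ref{r1.3}, namely keeping track of which ordered pair of spaces each arrow is a combinatorial similarity of, so that the composite is correctly identified as a self similarity of $(X, d)$ rather than a similarity between distinct spaces.
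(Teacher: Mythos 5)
Your proposal is correct and takes essentially the same route as the paper: both proofs transport an arbitrary permutation across the given combinatorial similarity by conjugation and then invoke the closure of combinatorial similarities under composition and inversion (Remark~\ref{r1.3}) to conclude that the conjugate, and hence the original permutation, lies in the relevant group \(\Cs\). The only differences are notational (which space you conjugate from) and your explicit packaging of the conjugation step as an ``if and only if'' claim, of which the paper, like you, really only uses one direction at a time.
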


\begin{proof}
Let \(\Phi \colon X \to Y\) be a combinatorial similarity and let
\begin{equation}\label{l2.6:e1}
\Cs(Y, \rho) = \Sym(Y)
\end{equation}
hold. Let us consider an arbitrary \(F \in \Sym(X)\). Then the mapping
\[
Y \xrightarrow{\Phi^{-1}} X \xrightarrow{F} X \xrightarrow{\Phi} Y
\]
is a permutation of \(Y\). Let us denote this permutation by \(\Psi\). Then \(\Psi\) belongs to \(\Cs(Y, \rho)\) by \eqref{l2.6:e1}. Since the permutation \(F\) of \(X\) coincides with the mapping
\[
X \xrightarrow{\Phi} Y \xrightarrow{\Phi^{-1}} X \xrightarrow{F} X \xrightarrow{\Phi} Y \xrightarrow{\Phi^{-1}} X,
\]
\(F\) belongs to \(\Cs(X, d)\) by Remark~\ref{r1.3}. Thus, \eqref{l2.6:e1} implies the equality
\begin{equation}\label{l2.6:e2}
\Cs(X, d) = \Sym(X).
\end{equation}
Arguing in a similar way, we can prove that \eqref{l2.6:e1} follows from \eqref{l2.6:e2}.
\end{proof}

\begin{proposition}\label{p4.4}
Let \((X, d)\) be a semimetric space with \(|X| \geqslant 3\). Then the following statements are equivalent:
\begin{enumerate}
\item\label{p4.4:s1} \((X, d)\) is discrete.
\item\label{p4.4:s2} \((X, d)\) contains an equilateral triangle and the equality \(\Cs(X, d) = \Sym(X)\) holds.
\end{enumerate}
\end{proposition}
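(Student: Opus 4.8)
The plan is to prove the two implications separately, with \(\ref{p4.4:s1}\Rightarrow\ref{p4.4:s2}\) being routine and the reverse implication carrying the real content.

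For \(\ref{p4.4:s1}\Rightarrow\ref{p4.4:s2}\), I would first note that discreteness gives \(k>0\) with \(d(x,y)=k\) for all distinct \(x\), \(y\in X\) (see \eqref{e3.5*}); since \(|X|\geqslant 3\), any three distinct points then have all pairwise distances equal to \(k\), so \((X,d)\) contains an equilateral triangle. For the equality \(\Cs(X,d)=\Sym(X)\), I would invoke the fact (recorded in the introduction) that a semimetric space is discrete if and only if \(\Iso(X,d)=\Sym(X)\), and combine it with the inclusions \(\Iso(X,d)\subseteq\Cs(X,d)\subseteq\Sym(X)\), which immediately yield \(\Cs(X,d)=\Sym(X)\).

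For \(\ref{p4.4:s2}\Rightarrow\ref{p4.4:s1}\), suppose \((X,d)\) contains an equilateral triangle \(\{a,b,c\}\), so that \(d(a,b)=d(b,c)=d(c,a)>0\), and suppose \(\Cs(X,d)=\Sym(X)\). The key step is to show that every three-point subspace of \((X,d)\) is equilateral. Given any pairwise distinct \(x\), \(y\), \(z\in X\), I would define the bijection \(\{a,b,c\}\to\{x,y,z\}\) by \(a\mapsto x\), \(b\mapsto y\), \(c\mapsto z\) and extend it to a permutation \(\Phi\in\Sym(X)\). Since \(\Cs(X,d)=\Sym(X)\), this \(\Phi\) is a combinatorial self similarity, so by Lemma~\ref{l2.3} the equality \(d(a,b)=d(b,c)\) forces \(d(\Phi(a),\Phi(b))=d(\Phi(b),\Phi(c))\), that is, \(d(x,y)=d(y,z)\). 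Applying the same reasoning to the cyclic reorderings of the triple gives \(d(x,y)=d(y,z)=d(z,x)\), so \(\{x,y,z\}\) is equilateral. It then remains to deduce discreteness: I would show that any two distances between distinct points coincide. If the two pairs share a point, that point together with the two pairs spans a triangle whose equilaterality gives the equality at once; if the pairs \(\{p,q\}\) and \(\{u,v\}\) are disjoint (which can occur only when \(|X|\geqslant 4\)), I would bridge them through the chain \(d(p,q)=d(q,u)=d(u,v)\) supplied by the triangles \(\{p,q,u\}\) and \(\{q,u,v\}\). Hence a single positive value \(k\) satisfies \(d(x,y)=k\) for all distinct \(x\), \(y\), so \(|d(X^2)|\leqslant 2\) and \((X,d)\) is discrete.

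The main obstacle is the transport device in the reverse implication: exploiting the single equality \(d(a,b)=d(b,c)\) available in the equilateral triangle and pushing it onto an arbitrary triple by an arbitrary permutation. The one technical point to verify is that the partial bijection \(\{a,b,c\}\to\{x,y,z\}\) always extends to a genuine permutation of \(X\); this holds because \(X\setminus\{a,b,c\}\) and \(X\setminus\{x,y,z\}\) have equal cardinality in both the finite and the infinite case. Everything after this reduction is a matter of chaining equilateral triangles, and the argument never needs \(|X|\geqslant 4\) except in the disjoint-pair bookkeeping, which is vacuous when \(|X|=3\).
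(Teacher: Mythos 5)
Your proof is correct. The forward implication coincides with the paper's (discreteness gives the triangle, and \(\Iso(X, d) = \Sym(X)\) together with \(\Iso(X, d) \subseteq \Cs(X, d) \subseteq \Sym(X)\) gives the group equality). In the reverse implication you and the paper both exploit \(\Cs(X, d) = \Sym(X)\) through Lemma~\ref{l2.3}, but with different permutations and a different decomposition. The paper never establishes your intermediate statement that every three-point subspace is equilateral; instead, for \(p = x_1\) and \(q \notin \{x_1, x_2, x_3\}\), it uses the transposition swapping \(x_2\) with \(q\). Since that transposition fixes both \(x_1\) and \(x_3\), the equality \(d(x_1, x_2) = d(x_1, x_3)\) from \eqref{p4.4:e1} transports under Lemma~\ref{l2.3} to \(d(x_1, q) = d(x_1, x_3) = a\): the pointwise-fixed pair \(\{x_1, x_3\}\) acts as an anchor, so the actual value \(a\), not merely an equality pattern, is carried to the new pair. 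Your permutation relocating the whole triangle onto an arbitrary triple fixes no reference pair, so it can transport only the pattern ``all three distances equal''; this is exactly why you need both the extra gluing step (chaining overlapping equilateral triangles to identify their constants) and the small set-theoretic check that the partial bijection \(\{a,b,c\} \to \{x,y,z\}\) extends to a permutation of \(X\) --- both of which you handle correctly. The trade-off: the paper's transpositions are trivially permutations and deliver the value \(a\) in one stroke, giving a shorter argument; your route yields the cleaner, uniform intermediate statement about all triples, at the cost of the extension argument and the additional chaining. Both proofs, in the end, must pass through intermediate triangles to reach pairs disjoint from the original triangle.
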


\begin{proof}
\(\ref{p4.4:s1} \Rightarrow \ref{p4.4:s2}\). Let \((X, d)\) be discrete. Then \((X, d)\) contains an equilateral triangle because \(|X| \geqslant 3\). Moreover, the equality \(\Iso(X, d) = \Sym(X)\) and the inclusions
\[
\Iso(X, d) \subseteq \Cs(X, d) \subseteq \Sym(X)
\]
imply \(\Cs(X, d) = \Sym(X)\).

\(\ref{p4.4:s2} \Rightarrow \ref{p4.4:s1}\). Let \ref{p4.4:s2} hold. Then there are some points \(x_1\), \(x_2\), \(x_3 \in X\) and \(a > 0\) such that
\begin{equation}\label{p4.4:e1}
d(x_1, x_2) = d(x_2, x_3) = d(x_3, x_1) = a.
\end{equation}
Let \(p\) and \(q\) be distinct points of \((X, d)\). We must prove the equality
\begin{equation}\label{p4.4:e2}
d(p, q) = a.
\end{equation}
The last equality trivially holds if \(\{p, q\} \subseteq \{x_1, x_2, x_3\}\). Let us consider the case when \(p \in \{x_1, x_2, x_3\}\), but \(q \notin \{x_1, x_2, x_3\}\). Without loss of generality we may assume that \(p = x_1\). Let us define a permutation \(F \colon X \to X\) as
\begin{equation}\label{p4.4:e3}
F(x) := \begin{cases}
q & \text{if } x = x_2\\
x_2 & \text{if } x = q\\
x & \text{otherwise}.
\end{cases}
\end{equation}
Then \(F\) is a combinatorial self similarity by statement \ref{p4.4:s2}. It follows from~\eqref{p4.4:e3} that \(x_1\) and \(x_3\) are fixed points of \(F\). Consequently, \eqref{p4.4:e1} implies \(d(F(x_1), F(q)) = a\) by Lemma~\ref{l2.3}. Using \eqref{p4.4:e3} we can rewrite the last equality as \(d(q, x_1) = a\). Since \(x_1 = p\), the equality \(d(q, x_1) = a\) implies \eqref{p4.4:e2}.

If the sets \(\{p, q\}\) and \(\{x_1, x_2, x_3\}\) are disjoint, then considering the triangles \(\{x_1, x_2, q\}\) and \(\{x_1, p, q\}\) we can get \eqref{p4.4:e2} as above.
\end{proof}

\begin{theorem}\label{t2.4}
Let \((X, d)\) be a nonempty semimetric space. Then the following statements are equivalent:
\begin{enumerate}
\item \label{t2.4:s1} At least one of the following conditions has been fulfilled:
\begin{enumerate}
\item \label{t2.4:s1.1} \((X, d)\) is strongly rigid;
\item \label{t2.4:s1.2} \((X, d)\) is discrete;
\item \label{t2.4:s1.3} \((X, d)\) is weakly rigid and all three-point subspaces of \((X, d)\) are isometric.
\end{enumerate}
\item \label{t2.4:s2} \(\Cs(X, d) = \Sym(X)\) holds.
\end{enumerate}
\end{theorem}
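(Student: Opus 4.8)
The plan is to prove \(\ref{t2.4:s1} \Leftrightarrow \ref{t2.4:s2}\) by establishing the two implications separately, reducing each situation as far as possible to the results already proved above. For \(\ref{t2.4:s1} \Rightarrow \ref{t2.4:s2}\) I would argue by cases according to which of \ref{t2.4:s1.1}--\ref{t2.4:s1.3} holds. If \((X,d)\) is strongly rigid, I would reuse the strongly rigid half of the proof of Lemma~\ref{l4.2}: for an arbitrary \(\Phi \in \Sym(X)\) the equivalence of Lemma~\ref{l2.3} holds, because \(d(x, y) = d(u, v) \neq 0\) forces \(\<x, y> = \<u, v>\) or \(\<x, y> = \<v, u>\), and the bijectivity of \(\Phi\) together with the symmetry of \(d\) transports this equality to the images (the subcase \(d(x, y) = d(u, v) = 0\) being trivial). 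If \((X,d)\) is discrete, then \(\Iso(X,d) = \Sym(X)\) and the inclusions \(\Iso(X,d) \subseteq \Cs(X,d) \subseteq \Sym(X)\) give \ref{t2.4:s2} at once. Finally, if \ref{t2.4:s1.3} holds I would split on \(|X|\): for \(|X| \leqslant 3\) condition \ref{t2.4:s1.3} implies that \((X,d)\) is strongly rigid (when \(|X| = 3\) the only three-point subspace is \(X\) itself, and smaller spaces are strongly rigid trivially), so the first subcase applies; for \(|X| \geqslant 4\), Proposition~\ref{p2.2} shows that \((X,d)\) is combinatorially similar to the space \((Z,d)\) of Example~\ref{ex2.1}, and since \(\Cs(Z,d) = \Sym(Z)\), Lemma~\ref{l2.6} yields \ref{t2.4:s2}.

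For the converse \(\ref{t2.4:s2} \Rightarrow \ref{t2.4:s1}\), assume \(\Cs(X,d) = \Sym(X)\). The case \(|X| \leqslant 2\) is immediate, since then \(|d(X^2)| \leqslant 2\) and \((X,d)\) is discrete. Suppose \(|X| \geqslant 3\). By Lemma~\ref{l4.3} every three-point subspace \(T\) satisfies \(\Cs(T, d|_{T^2}) = \Sym(T)\), so by Lemma~\ref{l4.2} each such \(T\) is strongly rigid or discrete. If some \(T\) is discrete it is an equilateral triangle, and then Proposition~\ref{p4.4} forces \((X,d)\) to be discrete, which is \ref{t2.4:s1.2}. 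Otherwise no three-point subspace is discrete, so every three-point subspace is strongly rigid and \((X,d)\) is weakly rigid by Definition~\ref{d1.6}; if moreover \((X,d)\) is strongly rigid we obtain \ref{t2.4:s1.1}, and it remains to treat the weakly but not strongly rigid case.

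This last case is where the real work lies. Since \((X,d)\) is not strongly rigid there are points with \(d(x, y) = d(u, v) \neq 0\) and \(\{x, y\} \neq \{u, v\}\); weak rigidity rules out the two pairs sharing a single point (this would yield a three-point subspace with a repeated nonzero distance), so we obtain four distinct points \(a_1\), \(a_2\), \(a_3\), \(a_4\) with \(d(a_1, a_2) = d(a_3, a_4) =: c \neq 0\). I would first show \(X = \{a_1, a_2, a_3, a_4\}\): if some \(e \in X\) lay outside this set, then applying the transpositions \((e\,a_1)\) and \((e\,a_2)\)---both combinatorial self similarities by hypothesis---to the equality \(d(a_1, a_2) = d(a_3, a_4)\) through Lemma~\ref{l2.3} would give \(d(e, a_2) = c\) and \(d(e, a_1) = c\), so that \(d(e, a_1) = d(e, a_2) = c \neq 0\), contradicting the strong rigidity of the triple \(\{e, a_1, a_2\}\). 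Once \(|X| = 4\), feeding the same equality through the transpositions \((a_1\,a_4)\) and \((a_1\,a_3)\) produces \(d(a_1, a_3) = d(a_2, a_4) =: \alpha\) and \(d(a_1, a_4) = d(a_2, a_3) =: \gamma\); weak rigidity of the triangle \(\{a_1, a_2, a_3\}\) makes \(c\), \(\alpha\), \(\gamma\) pairwise distinct, so each of the four triangles has distance set \(\{c, \alpha, \gamma\}\) and they are mutually isometric, which is \ref{t2.4:s1.3}. I expect the identification \(|X| = 4\)---and in particular the choice of the transpositions \((e\,a_1)\), \((e\,a_2)\) collapsing a would-be fifth point---to be the main obstacle, the remainder being bookkeeping on top of Lemmas~\ref{l2.3}, \ref{l4.2}, \ref{l4.3} and Propositions~\ref{p2.2}, \ref{p4.4}.
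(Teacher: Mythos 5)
Your proof is correct, and its second half takes a genuinely different route from the paper's. In the converse direction the paper argues by cardinality: it treats \(|X| \leqslant 2\), \(|X| = 3\), \(|X| = 4\) separately, and for \(|X| \geqslant 5\) it builds a five-point ``pyramid'' \(P = Y \cup \{p\}\) over the rectangle \(Y = \{x,y,u,v\}\), shows (via explicit five-point permutations and Lemma~\ref{l4.3}) that every four-point subspace of \(P\) is combinatorially similar to \(Y\) and hence has distance set \(\{0,a,b,c\}\), and then applies the Pigeonhole principle to the four distances \(d(p,x)\), \(d(p,y)\), \(d(p,u)\), \(d(p,v)\) to produce an isosceles triangle, which by Lemmas~\ref{l4.3}, \ref{l4.2} and Proposition~\ref{p4.4} forces \((X,d)\) to be discrete, a contradiction. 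You instead organize the case analysis structurally rather than by cardinality: either some three-point subspace is discrete (then Proposition~\ref{p4.4} gives discreteness of \(X\) at once), or all are strongly rigid, i.e.\ \((X,d)\) is weakly rigid; and in the weakly-but-not-strongly-rigid case your transposition trick --- pushing \(d(a_1,a_2) = d(a_3,a_4)\) through \((e\,a_1)\) and \((e\,a_2)\) to get \(d(e,a_1) = d(e,a_2) \neq 0\) for any would-be fifth point \(e\) --- kills \(|X| \geqslant 5\) in two lines, with disjointness of the two pairs also falling out of weak rigidity for free. This replaces the paper's pyramid construction entirely and is the more economical argument; the paper's version, on the other hand, keeps everything at the level of restrictions to subsets and exhibits the rectangle structure of all four-point subspaces explicitly, which is closer in spirit to Proposition~\ref{p2.2} and to Corollary~\ref{c4.6}. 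A further small point in your favour: in the direction \(\ref{t2.4:s1.3} \Rightarrow \ref{t2.4:s2}\) you note that \(|X| \leqslant 3\) reduces to the strongly rigid case before invoking Proposition~\ref{p2.2}; the paper applies Proposition~\ref{p2.2} directly, even though that proposition is stated only for \(|X| \geqslant 4\), so your version closes a gap the paper leaves implicit.
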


\begin{proof}
\(\ref{t2.4:s1} \Rightarrow \ref{t2.4:s2}\). It suffices to show that the implication \(\ref{t2.4:s1.3} \Rightarrow \ref{t2.4:s2}\) is valid.

Let \ref{t2.4:s1.3} hold. Then, by Proposition~\ref{p2.2}, \((X, d)\) is combinatorially similar to the rectangle from Example~\ref{ex2.1}. Since every permutation of vertices of this rectangle is a combinatorial self similarity, we obtain the equality \(\Cs(X, d) = \Sym(X)\) by Lemma~\ref{l2.6}.

\(\ref{t2.4:s2} \Rightarrow \ref{t2.4:s1}\). Let every permutation of \(X\) be a combinatorial self similarity of \((X, d)\). We must prove the validity of \ref{t2.4:s1}.

Let us consider first the case when \(|X| \leqslant 2\). In this case every semimetric \(d\) on \(X\) is discrete and strongly rigid. Thus, we have the validity of the implication \(\ref{t2.4:s2} \Rightarrow \ref{t2.4:s1.1}\) and \(\ref{t2.4:s2} \Rightarrow \ref{t2.4:s1.2}\) if \(|X| \leqslant 2\).

Let \(|X| = 3\) hold. Then exactly one from the implications \(\ref{t2.4:s2} \Rightarrow \ref{t2.4:s1.1}\), \(\ref{t2.4:s2} \Rightarrow \ref{t2.4:s1.2}\) is true by Lemma~\ref{l4.2}.

Suppose now that \(|X| = 4\) holds, but \(d \colon X^2 \to \RR\) is neither discrete nor strongly rigid. Since \((X, d)\) is not strongly rigid, there are two-point subsets \(\{x, y\}\), \(\{u, v\}\) of \(X\) and \(a \in (0, \infty)\) such that
\begin{equation}\label{t2.4:e6}
\{x, y\} \neq \{u, v\}
\end{equation}
and
\begin{equation}\label{t2.4:e7}
d(x, y) = d(u, v) = a.
\end{equation}
We claim that \(\)
\begin{equation}\label{t2.4:e8}
\{x, y\} \cap \{u, v\} = \varnothing.
\end{equation}

Indeed, if \eqref{t2.4:e8} does not hold, then, using~\eqref{t2.4:e6}, we obtain that \(Y = \{x, y\} \cup \{u, v\}\) is a triangle. By Lemma~\ref{l4.3}, from \ref{t2.4:s2} follows the equality
\begin{equation}\label{t2.4:e8.1}
\Cs(Y, d|_{Y^2}) = \Sym(Y).
\end{equation}
Now using Lemma~\ref{l4.2}, we see that the semimetric \(d|_{Y^2}\) is either strongly rigid or discrete. If \(d|_{Y^2}\) is strongly rigid, then from \(\{x, y\} \neq \{u, v\}\) it follows that \(d(x, y) \neq d(u, v)\), contrary to \eqref{t2.4:e7}. Consequently, \(d|_{Y^2}\) is discrete. By Proposition~\ref{p4.4}, the last statement implies that \(d \colon X^2 \to \RR\) is discrete contrary to our supposition. Equality~\eqref{t2.4:e8} follows.

From~\eqref{t2.4:e8} follows the equality \(X = \{x, y, u, v\}\). Applying Lemma~\ref{l2.3} to the permutation
\[
\begin{pmatrix}
x & y & u& v\\
x & u & y& v
\end{pmatrix},
\]
we obtain the equality
\begin{equation}\label{t2.4:e9}
d(x, u) = d(y, v) = b
\end{equation}
for some \(b \in (0, \infty)\) and, analogously, for the permutation
\[
\Phi := \begin{pmatrix}
x & y & u& v\\
v & x & u& y
\end{pmatrix},
\]
Lemma~\ref{l2.3} implies
\begin{equation}\label{t2.4:e10}
d(v, x) = d(u, y) = c
\end{equation}
for some \(c \in (0, \infty)\). To complete the proof of validity \(\ref{t2.4:s2} \Rightarrow \ref{t2.4:s1.3}\) it suffices to show that the numbers \(a\), \(b\), \(c\) are pairwise distinct, that can be done in the same way as in the proof of equality \eqref{t2.4:e8}.

Let us consider now the case \(|X| \geqslant 5\). We claim that the inequality \(|X| \geqslant 5\) and \eqref{t2.4:e8.1} imply that \(d\) is strongly rigid or discrete. To prove it, suppose contrary that \(d\) is neither strongly rigid nor discrete and construct a four-point set \(Y \subseteq X\) such that \((Y, d|_{Y^{2}})\) is combinatorially similar to the metric space from Example~\ref{ex2.1}. We will construct \(Y\) by modification of our proof of validity \(\ref{t2.4:s2} \Rightarrow \ref{t2.4:s1}\) when \(|X| = 4\).

Let \(\{x, y\}\) and \(\{u, v\}\) be two point subsets of \(X\) satisfying \eqref{t2.4:e6} and \eqref{t2.4:e7} (with some \(a > 0\)). Write \(Y := \{x, y\} \cup \{u, v\}\). As in the case \(|X| = 4\), we obtain \eqref{t2.4:e8} and \eqref{t2.4:e8.1}. From \eqref{t2.4:e6} and \eqref{t2.4:e7} it follows that \(d|_{Y^2}\) is not strongly rigid. If \(d|_{Y^2}\) is discrete, then, by Proposition~\ref{p4.4}, equality \eqref{t2.4:e8.1} implies that \(d\) is also discrete, contrary to our assumption. Moreover, arguing as in the case \(|X| = 4\), we can show that equalities \eqref{t2.4:e7}, \eqref{t2.4:e9} and \eqref{t2.4:e10} are valid with pairwise distinct \(a\), \(b\), \(c \in (0, \infty)\). Thus, we have \(|Y| = 4\) and \eqref{t2.4:e8.1}. It was shown above that \(\ref{t2.4:s2} \Rightarrow \ref{t2.4:s1.3}\) is true if \(|X| = 4\) and \(d\) is neither strongly rigid nor discrete. Consequently, \((Y, d|_{Y^2})\) is combinatorially similar to the metric space from Example~\ref{ex2.1}.

\begin{figure}[htb]
\begin{tikzpicture}
\def\xx{1.5cm}
\def\dx{0.6cm}
\def\dy{1cm}
\coordinate [label=left: \(x\)] (x1) at (0, 0);
\coordinate [label=left: \(y\)] (x2) at (\dx, \dy);
\coordinate [label=right: \(u\)] (x3) at (\xx + \dx, \dy);
\coordinate [label=right: \(v\)] (x4) at (\xx, 0);
\coordinate [label=right: \(p\)] (x5) at (.5*\xx, 3*\dy);
\draw [fill] (x1) circle (2pt);
\draw [fill] (x2) circle (2pt);
\draw [fill] (x3) circle (2pt);
\draw [fill] (x4) circle (2pt);
\draw [fill] (x5) circle (2pt);

\node at (4*\dx, 3*\dy) {\(P\)};

\draw (x1) -- (x2) -- (x3) -- (x4) -- (x1) -- (x5) -- (x2);
\draw (x3) -- (x5) -- (x4);
\end{tikzpicture}
\caption{From the quadruple \(\{x, y, u, v\}\) to the pyramid \(P\).}
\label{fig2}
\end{figure}
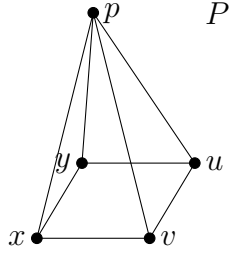

Since \(|X| \geqslant 5\) holds, there is a point \(p \in X \setminus Y\). Write
\[
P = \{x, y, u, v, p\}
\]
(see Figure~\ref{fig2}). Then we have the equality
\begin{equation}\label{t2.4:e11}
\Cs(P, d|_{P^2}) = \Sym(P)
\end{equation}
by Lemma~\ref{l4.3}. Furthermore, for every four-point set \(S \subseteq P\), equality \eqref{t2.4:e11} implies
\begin{equation}\label{t2.4:e12}
\Cs(S, d|_{S^2}) = \Sym(S).
\end{equation}
Now applying \eqref{t2.4:e12} and using the permutations
\begin{align*}
 & \begin{pmatrix}
x & y & u & v & p\\
p & y & u & v & x
\end{pmatrix}, &
 & \begin{pmatrix}
x & y & u & v & p\\
x & p & u & v & y
\end{pmatrix},\\
&  \begin{pmatrix}
x & y & u & v & p\\
x & y & p & v & u
\end{pmatrix} \textrm{\ and} &
 &  \begin{pmatrix}
x & y & u & v & p\\
x & y & u & p & v
\end{pmatrix}
\end{align*}
we see that every four-point subspace \((S, d|_{S^2})\) of \((P, d|_{P^2})\) is combinatorially similar to \((Y, d|_{Y^2})\). In addition, it is easy to see that, for every four-point \(S \subseteq P\), there is a three-point set \(T\) such that
\begin{equation}\label{t2.4:e13}
T \subseteq S \cap Y.
\end{equation}
Since for every three-point set \(T\) we have the equality
\[
d(T^2) = d(Y^2) = \{0, a, b, c\},
\]
\eqref{t2.4:e12} implies the equality $d(S^2) = \{0, a, b, c\}$ for every four-point \(S \subseteq P\), because \((S, d|_{S^2})\) is combinatorially similar to \((Y, d|_{Y^2})\).

Let us consider the ribs \(\<p, x>\), \(\<p, y>\), \(\<p, u>\), \(\<p, v>\) of the pyramid \(P\). Then, by Pigeonhole principle, we can find two different points in \(Y\), say \(u\) and \(v\), such that
\begin{equation}\label{t2.4:e15}
d(p, u) = d(p, v).
\end{equation}
Write \(Z := \{p, u, v\}\). Then we have \(\Cs(Z, d|_{Z^2}) = \Sym(Z)\) by Lemma~\ref{l4.3}. Consequently, \eqref{t2.4:e15} implies that \((Z, d|_{Z^2})\) is discrete by Lemma~\ref{l4.2}. Thus, \((X, d)\) is also discrete by Proposition~\ref{p4.4}. The discreteness of \((X, d)\) contradicts our assumption that \((X, d)\) is neither strongly rigid nor discrete. The proof is completed.
\end{proof}

\begin{corollary}\label{c4.6}
The following conditions are equivalent for every nonempty set \(X\):
\begin{enumerate}
\item\label{c4.6:s1} \(|X| = 4\).
\item\label{c4.6:s2} There is a semimetric \(d \colon X^2 \to \RR\) such that \(\Cs(X, d) = \Sym(X)\) but \(d\) is neither strongly rigid nor discrete.
\end{enumerate}
\end{corollary}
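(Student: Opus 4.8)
The plan is to establish the two implications separately, using Example~\ref{ex2.1} for the construction and the combination of Theorem~\ref{t2.4} with Proposition~\ref{p2.2} for the structural converse.

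For the implication \(\ref{c4.6:s1} \Rightarrow \ref{c4.6:s2}\), I would simply transport the metric of the rectangle onto \(X\). Let \((Z, d)\) be the four-point space of Example~\ref{ex2.1}. Since \(|X| = 4 = |Z|\), there is a bijection \(F \colon Z \to X\), and I define a semimetric \(\rho\) on \(X\) by \(\rho(F(z), F(w)) := d(z, w)\) for all \(z\), \(w \in Z\). Then \(F\) is an isometry, so \((X, \rho)\) and \((Z, d)\) are combinatorially similar, and Lemma~\ref{l2.6} transfers the equality \(\Cs(Z, d) = \Sym(Z)\) into \(\Cs(X, \rho) = \Sym(X)\). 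Because an isometry preserves distances exactly and is a bijection, the failure of strong rigidity passes from \((Z, d)\) to \((X, \rho)\); likewise \(|\rho(X^2)| = |d(Z^2)| \geqslant 3\), so \(\rho\) is not discrete. Hence \(\rho\) is the required semimetric.

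For the converse \(\ref{c4.6:s2} \Rightarrow \ref{c4.6:s1}\), suppose \(d\) is a semimetric on \(X\) with \(\Cs(X, d) = \Sym(X)\) that is neither strongly rigid nor discrete. First I would discard the small cases. If \(|X| \leqslant 2\) then \(d\) is automatically discrete, and if \(|X| = 3\) then Lemma~\ref{l4.2} forces \(d\) to be strongly rigid or discrete whenever \(\Cs(X, d) = \Sym(X)\); both conclusions contradict the hypotheses, so \(|X| \geqslant 4\). Now Theorem~\ref{t2.4} applies: from \(\Cs(X, d) = \Sym(X)\) one of conditions~\ref{t2.4:s1.1}, \ref{t2.4:s1.2}, \ref{t2.4:s1.3} must hold, and since the first two are excluded, \((X, d)\) is weakly rigid with all three-point subspaces isometric. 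This is exactly condition~\ref{p2.2:c2} of Proposition~\ref{p2.2}, which---as \(|X| \geqslant 4\)---makes \((X, d)\) combinatorially similar to the four-point rectangle of Example~\ref{ex2.1}. A combinatorial similarity is by definition a bijection, so \(|X| = 4\).

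No step presents a genuine obstacle: once Theorem~\ref{t2.4} and Proposition~\ref{p2.2} are available the argument is essentially bookkeeping, and the heart of the matter has already been absorbed into the proofs of those cited results. The only point deserving a line of care is the invariance of ``strongly rigid'' and ``discrete'' under the isometry used in the first direction, which is immediate from the definitions.
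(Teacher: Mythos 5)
Your proof is correct and follows precisely the route the paper intends: Corollary~\ref{c4.6} is stated there without an explicit proof as a consequence of Theorem~\ref{t2.4}, and your argument (transporting the rectangle of Example~\ref{ex2.1} onto \(X\) and invoking Lemma~\ref{l2.6} for one direction; excluding \(|X| \leqslant 3\) via Lemma~\ref{l4.2} and then combining Theorem~\ref{t2.4} with Proposition~\ref{p2.2} for the other) is exactly that intended derivation. The only cosmetic remark is that Lemma~\ref{l4.2} is formally stated for three-point \emph{metric} spaces, but its proof never uses the triangle inequality and the paper itself applies it to semimetric subspaces, so your use of it is consistent with the paper's own practice.
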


\begin{corollary}\label{c2.10}
Let \((X, d)\) be a semimetric space such that the inequality
\begin{equation}\label{c2.10:e1}
|X| > \mathfrak{c}
\end{equation}
holds, where \(\mathfrak{c}\) is the cardinality of the continuum. Then
\begin{equation}\label{c2.10:e2}
\Cs(X, d) = \Sym(X)
\end{equation}
holds if and only if we have
\begin{equation}\label{c2.10:e3}
\Iso(X, d) = \Sym(X).
\end{equation}
\end{corollary}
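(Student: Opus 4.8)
The plan is to read off the result from Theorem~\ref{t2.4} and Corollary~\ref{c4.6}, the only genuinely new ingredient being a cardinality estimate that excludes strong rigidity once \(|X|\) exceeds \(\mathfrak{c}\). The \emph{if} direction is immediate: the inclusions \(\Iso(X, d) \subseteq \Cs(X, d) \subseteq \Sym(X)\) show at once that \eqref{c2.10:e3} forces \eqref{c2.10:e2}, and this requires no assumption on \(|X|\).

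For the \emph{only if} direction I would assume \eqref{c2.10:e2} and apply Theorem~\ref{t2.4}, which tells us that \((X, d)\) satisfies at least one of conditions \ref{t2.4:s1.1}, \ref{t2.4:s1.2}, \ref{t2.4:s1.3}. Since \(|X| > \mathfrak{c}\) we have in particular \(|X| \neq 4\), so Corollary~\ref{c4.6} rules out \ref{t2.4:s1.3}: by that corollary a semimetric with \(\Cs = \Sym\) that is neither strongly rigid nor discrete can occur only when \(|X| = 4\). Thus the task reduces to eliminating the strongly rigid alternative \ref{t2.4:s1.1}, after which only discreteness \ref{t2.4:s1.2} survives; and \((X, d)\) is discrete precisely when \(\Iso(X, d) = \Sym(X)\) (the remark following \eqref{e3.5*}), which is \eqref{c2.10:e3}.

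The crux, and the step I expect to be the main obstacle, is showing that a strongly rigid semimetric cannot have \(|X| > \mathfrak{c}\). Here I would argue that strong rigidity makes the assignment \(\{x, y\} \mapsto d(x, y)\) injective on the set of two-element subsets of \(X\): if \(\{x, y\} \neq \{u, v\}\) are distinct two-element subsets, then by positivity \(d(x, y)\) and \(d(u, v)\) are nonzero, so an equality \(d(x, y) = d(u, v)\) would give, via Definition~\ref{d4.3}, \(\{x, y\} = \{u, v\}\), a contradiction. Hence we obtain an injection from the set \([X]^{2}\) of two-element subsets into \((0, \infty)\). For infinite \(X\) the cardinality of \([X]^{2}\) equals \(|X|\), so this injection yields \(|X| \leqslant |(0, \infty)| = \mathfrak{c}\), contradicting \eqref{c2.10:e1}. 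Therefore \((X, d)\) is not strongly rigid.

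Combining the three eliminations completes the argument: under \eqref{c2.10:e1}, condition \eqref{c2.10:e2} leaves discreteness as the only possibility among \ref{t2.4:s1.1}--\ref{t2.4:s1.3}, which is equivalent to \eqref{c2.10:e3}. Everything beyond the cardinality estimate is bookkeeping with results already at hand, so the only delicate point is the clean passage from strong rigidity to an injection of \([X]^{2}\) into the positive reals and the correct counting of two-element subsets of an infinite set.
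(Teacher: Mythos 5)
Your proof is correct and follows essentially the same route as the paper: reduce via Corollary~\ref{c4.6} to the ``strongly rigid or discrete'' alternatives, then eliminate strong rigidity by a cardinality bound, leaving discreteness, which is equivalent to \(\Iso(X, d) = \Sym(X)\). The only cosmetic difference is in the cardinality step: the paper fixes a single point \(p\) and observes that \(x \mapsto d(x, p)\) is injective, giving \(|X| \leqslant \mathfrak{c}\) immediately, whereas your injection of the two-element subsets \([X]^2\) into \((0, \infty)\) additionally invokes the (standard) cardinal-arithmetic fact that \(|[X]^2| = |X|\) for infinite \(X\).
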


\begin{proof}
It suffices to show that \eqref{c2.10:e2} implies \eqref{c2.10:e3}.

Let \eqref{c2.10:e2} hold. Then, by Corollary~\ref{c4.6}, \(d\) is strongly rigid or discrete. Suppose \(d\) is strongly rigid and let \(p\) be a point of \(X\). Then the mapping
\[
X \ni x \mapsto d(x, p) \in \RR
\]
is injective. Hence, we have the inequality \(|X| \leqslant \mathfrak{c}\) contrary to \eqref{c2.10:e1}. Thus, \(d\) is discrete, that implies \eqref{c2.10:e3}.
\end{proof}

We conclude the paper by an interesting example of four-point metric space \((Y, \rho)\) satisfying the equality \(\Cs(Y, \rho) = \Sym(Y)\).

\begin{example}
Recall (see \cite{Blumenthal1953} for instance) that a four-point metric space \((X, \rho)\) is called a \emph{pseudolinear quadruple} if for a suitable enumeration of the points we have
\begin{align*}
\rho(x_1, x_2) & = \rho(x_3, x_4) = s, & \rho(x_2, x_3) &= \rho(x_4, x_1) = t, \\
\rho(x_2, x_4) & = \rho(x_3, x_1) = s + t,
\end{align*}
with some positive reals \(s\) and \(t\). The pseudolinear quadruple \((X, \rho)\) is combinatorially similar to the rectangle from Example~\ref{ex2.1} iff \(s \neq t\).

The pseudolinear quadruples and their higher-dimensional modifications appeared for the first time in the famous paper of Menger \cite{Men1928MA} who in particular gave a criterion for the embeddability of metric spaces into \(\RR^n\). According to Menger, the pseudolinear quadruples are characterized as the metric spaces not isometric to any subset of $\mathbb{R}$ whose every triple of points embeds isometrically into \(\RR\). There is an elementary proof of this fact in \cite{DD2009UJ}.
\end{example}

\section*{Funding}

Oleksiy Dovgoshey was supported by Volkswagen Stiftung Project ``From
Modeling and Analysis to Approximation''.



\end{document}